\newcommand{\wh}{\widehat}
\newcommand{\Thmname}{Theorem}
\newcommand{\Propname}{Proposition}
\newcommand{\Lemmaname}{Lemma}
\newcommand{\Definitionname}{Definition}
\newtheorem{Thm}{\Thmname}
\newtheorem{Lemma}[Thm]{\Lemmaname}
{\theoremstyle{definition}
}
{\theoremstyle{remark}
}
\theoremstyle{remark}
\newtheorem*{Lemma*}{Lemma}
\title{A simple proof of Brown's diagonalizability theorem}
\author{Benjamin Steinberg}
\address{School of Mathematics and Statistics\\
Carleton University \\
1125 Colonel By Drive\\
Ottawa, Ontario  K1S 5B6 \\
Canada}
\thanks{The author was supported in part by NSERC}
\email{bsteinbg@math.carleton.ca}
\date{October 10, 2009}
\begin{document}
\maketitle

We present here a simple proof of Brown's diagonalizability theorem for certain elements of the algebra of a left regular band~\cite{Brown1,Brown2}, including probability measures.  Brown's theorem also provides a uniform explanation for the diagonalizability of certain elements of Solomon's descent algebra, since the descent algebra embeds in a left regular band algebra~\cite{Brown1,Brown2}.  Recall that a left regular band is a semigroup satisfying the identities $x^2=x$ and $xyx=xy$.  In this paper all semigroups are assumed finite.

Let $S$ be a left regular band with identity (there is no loss of generality in assuming this) and let $L$ be the lattice of principal left ideals of $S$ ordered by inclusion\footnote{Brown calls the dual of this lattice the support lattice.}.  We view $L$ as a monoid via its meet, which is just intersection. There is a natural surjective homomorphism $\sigma\colon S\to L$, called the \emph{support map}, given by $\sigma(s) = Ss$.  A key fact that we shall exploit is that $\sigma(s)\leq \sigma(t)$ if and only if $st=s$, that is, $s\in St$ if and only if $st=s$. Indeed, let $S$ act on the right of itself. Because $t$ is an idempotent, it acts as the identity on its image; but this is just $St$.

Let $k$ be a field and let
\begin{equation}\label{w}
w=\sum_{t\in S}w_tt\in kS.
\end{equation}
For $X\in L$, define
\begin{equation}\label{lambdas}
\lambda_X=\sum_{\sigma(t)\geq X}w_t.
\end{equation}
Brown~\cite{Brown1,Brown2} showed that $k[w]$ is split semisimple provided that $X>Y$ implies  $\lambda_X\neq \lambda_Y$.  We give a new proof of this by showing that if $\lambda_1,\ldots,\lambda_k$ are the distinct elements of $\{\lambda_X\mid X\in L\}$, then
\begin{equation}\label{minpol}
0=\prod_{i=1}^k(w-\lambda_i).
\end{equation}
This immediately implies that the minimal polynomial of $w$ has distinct roots and hence $k[w]$ is split semisimple.

Everything is based on the following formula for $sw$.

\begin{Lemma}\label{theformula}
Let $s\in S$.  Then
\[sw = \lambda_{\sigma(s)}s + \sum_{\sigma(t)\ngeq \sigma(s)} w_tst\]
and moreover, $\sigma(s)>\sigma(st)$ for all $t$ with $\sigma(t)\ngeq \sigma(s)$.
\end{Lemma}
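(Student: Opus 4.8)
The plan is to expand $sw=\sum_{t\in S}w_t\,st$ directly and split the sum according to whether $\sigma(t)\geq\sigma(s)$ or not. For the terms with $\sigma(t)\geq\sigma(s)$, the key fact recalled above (that $\sigma(s)\leq\sigma(t)$ is equivalent to $st=s$) shows $st=s$, so each such term contributes $w_t\,s$; summing over all such $t$ collects exactly the scalar $\lambda_{\sigma(s)}=\sum_{\sigma(t)\geq\sigma(s)}w_t$ in front of $s$. The terms that remain are by definition $\sum_{\sigma(t)\ngeq\sigma(s)}w_t\,st$, which yields the displayed formula. This first half is essentially bookkeeping resting on a single application of the key fact.

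The substance is the \emph{moreover} claim, and I would begin by establishing that $\sigma(st)\leq\sigma(s)$ holds for every $t$. By the key fact applied to the pair $(st,s)$, this inequality is equivalent to the identity $(st)s=st$, which is immediate from the defining relation $xyx=xy$ of a left regular band with $x=s$ and $y=t$. Hence every term appearing in the residual sum already has support weakly below $\sigma(s)$.

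It then remains to upgrade $\leq$ to a strict $<$ whenever $\sigma(t)\ngeq\sigma(s)$, and this is the main obstacle. I would show the contrapositive, that $\sigma(st)=\sigma(s)$ forces $\sigma(t)\geq\sigma(s)$. Equality of the two principal left ideals gives in particular $\sigma(s)\leq\sigma(st)$, which by the key fact applied now to the pair $(s,st)$ means $s(st)=s$; using $s^2=s$ this collapses to $st=s$, and one further application of the key fact to $(s,t)$ delivers $\sigma(s)\leq\sigma(t)$, i.e. $\sigma(t)\geq\sigma(s)$. Taking the contrapositive, $\sigma(t)\ngeq\sigma(s)$ implies $\sigma(st)\neq\sigma(s)$, which together with $\sigma(st)\leq\sigma(s)$ from the previous step yields $\sigma(st)<\sigma(s)$, completing the argument. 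The only real care needed is to keep straight which direction of the key fact is invoked at each stage, since it is used three times with different arguments.
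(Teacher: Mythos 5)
Your proof is correct, and the first half is identical to the paper's: split the sum over $t$ according to whether $\sigma(t)\geq\sigma(s)$, apply the key fact to collapse the first block to $\lambda_{\sigma(s)}s$. Where you diverge is the \emph{moreover} clause. The paper dispatches it in one line by using that $\sigma$ is a homomorphism onto $L$ viewed as a monoid under meet: $\sigma(st)=\sigma(s)\sigma(t)=\sigma(s)\wedge\sigma(t)$, which is strictly below $\sigma(s)$ exactly when $\sigma(t)\ngeq\sigma(s)$. You never invoke the homomorphism property at all; instead you get $\sigma(st)\leq\sigma(s)$ from $(st)s=st$ (the identity $xyx=xy$) via the key fact, and you rule out equality by observing that $\sigma(st)=\sigma(s)$ would give $s(st)=s$, hence $st=s$ by $s^2=s$, hence $\sigma(t)\geq\sigma(s)$ by a final application of the key fact. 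Both arguments are sound. The paper's is shorter because it leans on structure already established in the setup (that $\sigma$ is a homomorphism and that multiplication in $L$ is meet), while yours is self-contained at the level of the key fact and the defining identities $x^2=x$, $xyx=xy$ of a left regular band --- which makes explicit that this lemma requires nothing about the lattice $L$ beyond the equivalence $\sigma(s)\leq\sigma(t)\iff st=s$.
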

\begin{proof}
Using that $\sigma(t)\geq \sigma(s)$ implies $st=s$, we compute
\begin{align*}
sw &= \sum_{\sigma(t)\geq \sigma(s)}w_tst+\sum_{\sigma(t)\ngeq \sigma(s)}w_tst\\
   &= \sum_{\sigma(t)\geq \sigma(s)}w_ts+\sum_{\sigma(t)\ngeq \sigma(s)}w_tst\\
   &= \lambda_{\sigma(s)}s+\sum_{\sigma(t)\ngeq \sigma(s)}w_tst.
\end{align*}
It remains to observe that $\sigma(t)\ngeq \sigma(s)$ implies  $\sigma(st) = \sigma(s)\sigma(t)< \sigma(s)$.
\end{proof}

The proof of \eqref{minpol} proceeds via an induction on the support. Let us write $\wh 0$ for the bottom of $L$ and $\wh 1$ for the top.  If $X\in L$, put
\[\Lambda_X = \{\lambda_Y\mid Y\leq X\}\ \text{and}\ \Lambda'_X=\{\lambda_Y\mid Y<X\}.\]
Our hypothesis says exactly that $\Lambda_X = \{\lambda_X\}\mathrel{\dot{\cup}} \Lambda'_X$ (disjoint union).  Define polynomials $p_X(z)$ and $q_X(z)$, for $X\in L$, by
\begin{align*}
p_X(z) &= \prod_{\lambda_i\in \Lambda_X}(z-\lambda_i)\\
q_X(z) &= \prod_{\lambda_i\in \Lambda_X'}(z-\lambda_i)=\frac{p_X(z)}{z-\lambda_X}.
\end{align*}
Notice that, for $X>Y$, we have $\Lambda_Y\subseteq \Lambda'_X$, and hence $p_Y(z)$ divides $q_X(z)$, because $\lambda_X\notin \Lambda_Y$ by assumption.  Also observe that  \[p_{\wh 1}(z) = \prod_{i=1}^k(z-\lambda_i)\] and hence establishing \eqref{minpol} is equivalent to proving $p_{\wh 1}(w)=0$.

\begin{Lemma}\label{kill}
If $s\in S$, then $s\cdot p_{\sigma(s)}(w)=0$.
\end{Lemma}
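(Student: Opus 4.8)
The plan is to prove this by induction on the support $\sigma(s)$ in the lattice $L$, working upward from the bottom element $\wh 0$. The base case is when $\sigma(s)=\wh 0$, which forces $s$ to be a minimal idempotent whose support is the bottom; here $\Lambda_{\wh 0}=\{\lambda_{\wh 0}\}$, so $p_{\wh 0}(z)=z-\lambda_{\wh 0}$, and I need $s(w-\lambda_{\wh 0})=0$. Applying Lemma~\ref{theformula}, the sum $\sum_{\sigma(t)\ngeq\sigma(s)}w_tst$ is empty because nothing can fail to lie above the bottom, so $sw=\lambda_{\sigma(s)}s=\lambda_{\wh 0}s$, giving the result immediately.

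For the inductive step, suppose the claim holds for all elements of $S$ whose support is strictly below $\sigma(s)$, and fix $s$ with $X=\sigma(s)$. The idea is to factor $p_X(z)=(z-\lambda_X)\,q_X(z)$ and compute $s\cdot p_X(w)$ by first applying $s$ to $w-\lambda_X$. From Lemma~\ref{theformula} I have
\[
s(w-\lambda_X) = sw-\lambda_X s = \sum_{\sigma(t)\ngeq \sigma(s)} w_t\, st,
\]
since the $\lambda_{\sigma(s)}s=\lambda_X s$ term cancels. The crucial structural input is the second assertion of Lemma~\ref{theformula}: each term $st$ appearing here satisfies $\sigma(st)<\sigma(s)=X$. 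This is exactly what lets me invoke the inductive hypothesis on each $st$.

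The computation I want is
\[
s\cdot p_X(w) = s(w-\lambda_X)\,q_X(w) = \Bigl(\sum_{\sigma(t)\ngeq X} w_t\, st\Bigr) q_X(w),
\]
where I have used that scalar polynomials in $w$ commute with the left action and pulled $q_X(w)$ through. Now for each surviving term $st$ I set $Y=\sigma(st)<X$ and recall from the remark after the definitions that $p_Y(z)$ divides $q_X(z)$, say $q_X(z)=p_Y(z)\,r(z)$ for some polynomial $r$. Then
\[
(st)\,q_X(w) = (st)\,p_Y(w)\,r(w) = \bigl((st)\,p_{\sigma(st)}(w)\bigr) r(w) = 0
\]
by the inductive hypothesis applied to the element $st$, whose support is $Y$. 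Since every term vanishes, the whole sum vanishes and $s\cdot p_X(w)=0$, completing the induction.

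The main obstacle to watch is the divisibility bookkeeping: I must be certain that $p_{\sigma(st)}(z)$ genuinely divides $q_X(z)$ for every term, and this rests squarely on the hypothesis that $X>Y$ implies $\lambda_X\neq\lambda_Y$, which guarantees $\lambda_X\notin\Lambda_{\sigma(st)}$ so that $\Lambda_{\sigma(st)}\subseteq\Lambda'_X$. A secondary point requiring care is the commutativity used to slide $q_X(w)$ past the left multiplication by $st$ — this is fine because $q_X(w)\in k[w]$ and left and right multiplications in $kS$ commute, but it is worth stating explicitly. Everything else is a direct substitution.
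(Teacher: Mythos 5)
Your proof is correct and follows essentially the same route as the paper: induction on $\sigma(s)$ in $L$, with the base case from the vanishing of the second sum in Lemma~\ref{theformula} and the inductive step from the factorization $p_X(z)=(z-\lambda_X)q_X(z)$ together with the divisibility $p_{\sigma(st)}(z)\mid q_X(z)$. (The step you flag as needing commutativity is really just associativity in $kS$, but this does not affect the argument.)
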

\begin{proof}
The proof is by induction on $\sigma(s)$ in the lattice $L$.  Suppose first $\sigma(s)=\wh 0$; note that $p_{\wh 0}(z)=z-\lambda_{\wh 0}$. Then since $\sigma(t)\geq \sigma(s)$ for all $t\in S$, Lemma~\ref{theformula} immediately yields $s(w-\lambda_{\sigma(s)}) = 0$.   In general, assume the lemma holds for all $s'\in S$ with $\sigma(s')<\sigma(s)$.  Then by Lemma~\ref{theformula}
\begin{align*}
s\cdot p_{\sigma(s)}(w)= s\cdot (w-\lambda_{\sigma(s)})\cdot q_{\sigma(s)}(w)
= \sum_{\sigma(t)\ngeq \sigma(s)}w_tst\cdot q_{\sigma(s)}(w)
=0.
\end{align*}
Here the last equality follows because $\sigma(t)\ngeq \sigma(s)$ implies $\sigma(s)>\sigma(st)$ and so $p_{\sigma(st)}(z)$ divides $q_{\sigma(s)}(z)$, whence induction yields $st\cdot q_{\sigma(s)}(w)=0$.
\end{proof}

Applying the lemma to the identity element of $S$ yields $p_{\wh 1}(w)=0$ and hence we have proved:

\begin{Thm}
Let $w$ be as in \eqref{w} and let $\lambda_X$ be as in \eqref{lambdas} for $X\in L$.  If $X>Y$ implies $\lambda_X\neq \lambda_Y$, then $k[w]$ is split semisimple.
\end{Thm}

If $k=\mathbb R$, and $w$ is a probability measure, then $X>Y$ implies $\lambda_X>\lambda_Y$ provided the support of $w$ generates $S$ as a monoid.  If this is not the case, then semisimplicity of $\mathbb R[w]$ follows by considering $\mathbb R[w]\subseteq \mathbb RT\subseteq \mathbb RS$ where $T$ is the submonoid generated by the support of $w$.

\bibliographystyle{abbrv}
\bibliography{standard2}
\end{document}